\newtheorem{thm}{Theorem}[section]
\newtheorem{prop}[thm]{Proposition}
\newtheorem{cor}[thm]{Corollary}
\newtheorem{lemma}[thm]{Lemma}
	\renewcommand{\L}{\Lambda}
	\renewcommand{\l}{\lambda}
	\newcommand{\Okm}{\Omega_{k,m}}
	\newcommand{\NN}{\mathbb{N}}
	\newcommand{\ZZ}{\mathbb{Z}}
	\newcommand{\TT}{\mathbb{T}}	
	\DeclareMathOperator{\Obj}{Obj}
	\DeclareMathOperator{\Mor}{Mor}
	\DeclareMathOperator{\MCE}{MCE}
	\def\IoIIdimdots(#1/#2/#3,#4){\node at (#1,#4) {\color{gray}$.$};\node at (#2,#4) {\color{gray}$.$};\node at (#3,#4) {\color{gray}$.$};}
	\def\IIoIIdimdots(#1,#2/#3/#4){\node at (#1,#2) {\color{gray}$.$};\node at (#1,#3) {\color{gray}$.$};\node at (#1,#4) {\color{gray}$.$};}
	\def\IoIIIdimdots(#1/#2/#3,#4,#5){\node at (#1,#4,#5) {\color{gray}$.$};\node at (#2,#4,#5) {\color{gray}$.$};\node at (#3,#4,#5) {\color{gray}$.$};}
	\def\IIoIIIdimdots(#1,#2/#3/#4,#5){\node at (#1,#2,#5) {\color{gray}$.$};\node at (#1,#3,#5) {\color{gray}$.$};\node at (#1,#4,#5) {\color{gray}$.$};}
	\def\IIIoIIIdimdots(#1,#2,#3/#4/#5){\node at (#1,#2,#3) {\color{gray}$.$};\node at (#1,#2,#4) {\color{gray}$.$};\node at (#1,#2,#5) {\color{gray}$.$};}
\begin{document}

\title{Aperiodicity Conditions in Topological $k$-Graphs}

\author{Sarah Wright}
\address{College of the Holy Cross, Department of Mathematics and Computer Science, One College Street, Worcester, MA, 01610}
\email{swright@holycross.edu}

\begin{abstract}

We give two new conditions on topological $k$-graphs that are equivalent to the Yeend's aperiodicity Condition (A).  Each of the new conditions concerns finite paths rather than infinite.  We use a specific example, resulting from a new construction of a twisted topological $k$-graph, to demonstrate the improvements achieved by the new conditions.  Reducing this proof of equivalence to the discrete case also gives a new direct proof of the corresponding conditions in discrete $k$-graphs, where previous proofs depended on simplicity of the corresponding C$^*$-algebra.

\end{abstract}

\keywords{topological $k$-graph, higher-rank graph, graph algebra, aperiodicity}

\subjclass[2000]{Primary 46L05; Secondary 22A22, 05C99}

\maketitle

\section{Introduction}

The theory of graph C$^*$-algebras began with the work of Cuntz and Kreiger \cite{Cuntz-Krieger} and the later work of Enomoto and Watatani \cite{EnomotoWatatani}.  Since then, there have been many contributions by a variety of researchers resulting in an extensive collection of literature.  The main idea is to associate to each directed graph a C$^*$-algebra and use the combinatorics of the graph to answer questions about the structure of the C$^*$-algebra.  The study of graph algebras had been particularly fruitful in its provision of a rich class of easily accessible examples of C$^*$-algebras with various properties.

To this end, the idea of a graph has been generalized in a few ways including the $k$-graphs of Kumjian and Pask \cite{KP} and the topological graphs of Katsura \cite{Katsura1}.  Most recently, the work of Yeend provides a generalization of both higher-rank graphs and topological graphs, with the unifying theory of topological $k$-graphs \cite{Yeend1}.

An important outcome of the study of each type of graph is the relationship between the periodic paths (or lack there of) in the graph and the simplicity of the associated C$^*$-algebra.  The first such aperiodicity condition for directed graphs appears in \cite{KPRR}, there referred to as Condition (L), and states that every cycle of the graph must have an entry.  This is one of the necessary conditions for simplicity and that relationship demonstrates the beauty of the subject; it is easy to look at picture of a directed graph and check  if each cycle has an entry.  

The key to cycles having or not having entries is the idea of being able to ``back out of a cycle" and build an aperiodic infinite path.  As the infinite paths play an important role in the groupoid construction of the C$^*$-algebra, in the case of higher-rank graphs the original aperiodicity condition, called Condition (A), was stated in terms of infinite paths.  Lewin, Robertson, and Sims have developed conditions on higher-rank graphs, which are equivalent to aperiodicity \cite{LS, RandS}, that involve only finite paths.  The work of this paper gives extensions of these aperiodicity conditions to the case of topological $k$-graphs and proves their equivalence.

In the first section we'll discuss the background on topological $k$-graphs that is needed.  In the middle section we state the two new aperiodicity conditions and prove the main result of equivalence with Yeend's Condition (A).  We also address a new proof of the equivalence of the corresponding discrete conditions.  The final section gives a new method of constructing a topological $k$-graph from a discrete $k$-graph.  The construction of these twisted topological $k$-graphs shares the flavor of Yeend's skew product graphs \cite[Definition 8.1]{Yeend1} as well as the topological dynamical systems defined by Farthing, Patani, and Willis, \cite{FPW}.  An advantage over the skew product graphs is the fact that we need not begin with a topological $k$-graph, but can use a discrete graph with desired properties.  Also, we need only a suitable topological space and a continuous functor rather than the $k$ commuting local homeomorphisms $\{T_i\}$ of topological dynamical systems.  We give a specific example of this construction and use one of the new conditions to show the topological $k$-graph is aperiodic, demonstrating the improvements gained by considering finite paths.

{\bf Acknowledgements:}  I would like to give special thanks to Aidan Sims for many helpful conversations, ideas, and motivation.  I would also like to thank an anonymous referee for pointing out the errors in the original statement of the main theorem as well as many insightful comments.

\section{Background}

The basics (and considerably more) on topological $k$-graphs and their C$^*$-algebras can be found in Yeend's original papers \cite{Yeend2, Yeend1, YeendThesis}.  We review some of the basics here for convenience.

We will consider $\NN$ to contain 0 and regard $\NN^k$ as the category with a single object and composition given by addition.  We use $\{e_i\}_{i=1}^k$ to represent the standard basis of $\NN^k$ and for $m \in \NN^k$ denote the $i^{\text{th}}$ component by $m_i$.  For $m, n \in \NN^k$, we say $m \geq n$ if $m_i \geq n_i$ for every $i \in \{1, 2, \dots, k\}$, write $m \vee n$ for the coordinate maximum, and $m \wedge n$ for the coordinate minimum.  

For a natural number $k$, a \emph{topological $k$-graph} is a pair $(\L, d)$  consisting of a small category $\L = \left(\Obj(\L), \Mor(\L), r, s\right)$ and a functor $d:\L \rightarrow \NN^k$ which satisfy the following:
\begin{enumerate}
	\item  $\Obj(\L)$ and $\Mor(\L)$ are second-countable, locally compact, Hausdorff spaces;
	\item  $r, s: \Mor(\L) \rightarrow \Obj(\L)$ are continuous and $s$ is a local homeomorphism;
	\item  Composition $\circ: \L \times_c \L \rightarrow \L$ is continuous and open, where $\L \times_c \L$ has the relative topology inherited from the product topology on $\L \times \L$;
	\item  $d: \L \rightarrow \NN^k$ is continuous, where $\NN^k$ has the discrete topology; and
	\item  \label{UFP}For all $\l \in \L$ and all $m, n \in \NN^k$ such that $d(\l) = m + n$, there exist unique $\mu, \nu \in \L$ such that $d(\mu) = m$, $d(\nu) = n$, and $\l = \mu\nu$.
\end{enumerate}

Condition~\eqref{UFP} is called the \emph{unique factorization property} and is exactly the same as that for discrete $k$-graphs.  The functor $d$ is called the \emph{degree map}, or the \emph{shape map}.  For $n \in \NN^k$, we denote by $\L^n$ the open set $d^{-1}(n)$ in $\Mor(\L)$, and refer to its elements as \emph{paths of shape $n$}.  It is often necessary to deal with the range and source maps as they relate to paths of a specific shape, so the notation $r_n$ refers to the restriction of the range map $r$ to the set $\L^n$ and similarly $s_n := s|_{\L^n}$.  A consequence of the unique factorization property is that $\L^0$ is the set of identity morphisms of $\L$, and these are referred to as the \emph{vertices} of $\L$.  For any sets $X, Y \subset \L$ we use $XY$ for the set $\{\mu\nu \,|\, \mu \in X, \nu \in Y , s(\mu) = r(\nu)\}$.  This notation is of particular use when the first set is a set of vertices and the second set is all the paths.  So, for $V \subset \L^0$, $V\L = \{ \l \in \L \,|\,r(\l) \in V\}$ and $\L V = \{ \l \in \L \,|\, s(\l) \in V\}$.  A vertex $v$ is a \emph{source} in $\L$ if for some $n \in \NN^k$ $v\L^n$ is empty.

The factorization property implies that for $\l \in \L$ and $m < n < d(\l) \in \NN^k$ there are unique paths denoted $\l(0,m) \in \L^m$, $\l(m,n)\in \L^{n-m}$, and $\l\left(n,d(\l)\right) \in \L^{d(\l) - n}$ such that $\l(0,m)\l(m,n)\l(n,d(\l)) = \l$.  We think of $\l(p,q)$ as the portion of the path $\l$ that runs from $q$ to $p$.

An important class of examples of $k$-graphs are the \emph{grid graphs}, $\Okm$.  For a fixed $k \geq 1$ and $m \in \left(\NN \cup \{\infty\}\right)^k$ the discrete $k$-graph $\Okm$ has morphisms $\{(p,q) \in \NN^k \times \NN^k\,|\,p \leq q \leq m\}$, range and source maps given by $r(p,q) = (p,p)$ and $s(p,q) = (q,q)$, composition defined as $(p,q)(q,r) = (p,r)$ and degree map given by $d(p,q) = q-p$.

We visualize a discrete $k$-graph by its \emph{1-skeleton}, a directed graph whose vertices are $\L^0$ and whose edges are paths from the sets $\L^{e_i}$ and are colored $k$ different colors depending on the shape.  Frequently dashed or dotted arrows are also (or instead) used to depict edges of different shape.  For consistency, in this paper green edges will be solid and of shape $e_1$, blue edges dotted and of shape $e_2$, and red edges dashed and of shape $e_3$.
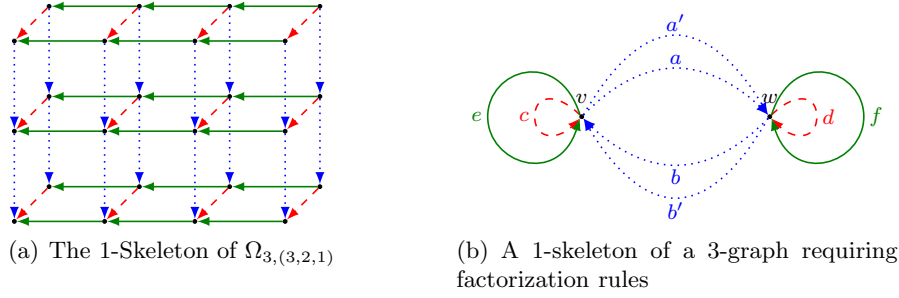
\begin{figure}[h]
\centering
\subfigure[The 1-Skeleton of $\Omega_{3, (3,2,1)}$]{
\begin{tikzpicture}[scale=1.2]

 \node[inner sep=.5pt, circle, fill=black] (vert000) at (0,0,0) [draw] {};
 \node[inner sep=.5pt, circle, fill=black] (vert100) at (1,0,0) [draw] {};
 \node[inner sep=.5pt, circle, fill=black] (vert200) at (2,0,0) [draw] {};
 \node[inner sep=.5pt, circle, fill=black] (vert300) at (3,0,0) [draw] {};
 
 \node[inner sep=.5pt, circle, fill=black] (vert001) at (0,0,1) [draw] {};
 \node[inner sep=.5pt, circle, fill=black] (vert101) at (1,0,1) [draw] {};
 \node[inner sep=.5pt, circle, fill=black] (vert201) at (2,0,1) [draw] {};
 \node[inner sep=.5pt, circle, fill=black] (vert301) at (3,0,1) [draw] {};
 
 \node[inner sep=.5pt, circle, fill=black] (vert010) at (0,1,0) [draw] {};
 \node[inner sep=.5pt, circle, fill=black] (vert110) at (1,1,0) [draw] {};
 \node[inner sep=.5pt, circle, fill=black] (vert210) at (2,1,0) [draw] {};
 \node[inner sep=.5pt, circle, fill=black] (vert310) at (3,1,0) [draw] {};
 
 \node[inner sep=.5pt, circle, fill=black] (vert011) at (0,1,1) [draw] {};
 \node[inner sep=.5pt, circle, fill=black] (vert111) at (1,1,1) [draw] {};
 \node[inner sep=.5pt, circle, fill=black] (vert211) at (2,1,1) [draw] {};
 \node[inner sep=.5pt, circle, fill=black] (vert311) at (3,1,1) [draw] {}; 
 
 \node[inner sep=.5pt, circle, fill=black] (vert020) at (0,2,0) [draw] {};
 \node[inner sep=.5pt, circle, fill=black] (vert120) at (1,2,0) [draw] {};
 \node[inner sep=.5pt, circle, fill=black] (vert220) at (2,2,0) [draw] {};
 \node[inner sep=.5pt, circle, fill=black] (vert320) at (3,2,0) [draw] {};
 
 \node[inner sep=.5pt, circle, fill=black] (vert021) at (0,2,1) [draw] {};
 \node[inner sep=.5pt, circle, fill=black] (vert121) at (1,2,1) [draw] {};
 \node[inner sep=.5pt, circle, fill=black] (vert221) at (2,2,1) [draw] {};
 \node[inner sep=.5pt, circle, fill=black] (vert321) at (3,2,1) [draw] {};

 \draw[style=semithick, green!50!black, -latex] (vert120.west)--(vert020.east);
 \draw[style=semithick, green!50!black, -latex] (vert110.west)--(vert010.east);
 \draw[style=semithick, green!50!black, -latex] (vert100.west)--(vert000.east);
 \draw[style=semithick, green!50!black, -latex] (vert220.west)--(vert120.east);
 \draw[style=semithick, green!50!black, -latex] (vert210.west)--(vert110.east);
 \draw[style=semithick, green!50!black, -latex] (vert200.west)--(vert100.east);
 \draw[style=semithick, green!50!black, -latex] (vert320.west)--(vert220.east);
 \draw[style=semithick, green!50!black, -latex] (vert310.west)--(vert210.east);
 \draw[style=semithick, green!50!black, -latex] (vert300.west)--(vert200.east);
 
 \draw[dotted, style=semithick, blue, -latex] (vert020.south)--(vert010.north);
 \draw[dotted, style=semithick, blue, -latex] (vert120.south)--(vert110.north);
 \draw[dotted, style=semithick, blue, -latex] (vert220.south)--(vert210.north);
 \draw[dotted, style=semithick, blue, -latex] (vert320.south)--(vert310.north);
 \draw[dotted, style=semithick, blue, -latex] (vert010.south)--(vert000.north);
 \draw[dotted, style=semithick, blue, -latex] (vert110.south)--(vert100.north);
 \draw[dotted, style=semithick, blue, -latex] (vert210.south)--(vert200.north);
 \draw[dotted, style=semithick, blue, -latex] (vert310.south)--(vert300.north);
 
 \draw[dotted, style=semithick, blue, -latex] (vert021.south)--(vert011.north);
 \draw[dotted, style=semithick, blue, -latex] (vert121.south)--(vert111.north);
 \draw[dotted, style=semithick, blue, -latex] (vert221.south)--(vert211.north);
 \draw[dotted, style=semithick, blue, -latex] (vert321.south)--(vert311.north);
 \draw[dotted, style=semithick, blue, -latex] (vert011.south)--(vert001.north);
 \draw[dotted, style=semithick, blue, -latex] (vert111.south)--(vert101.north);
 \draw[dotted, style=semithick, blue, -latex] (vert211.south)--(vert201.north);
 \draw[dotted, style=semithick, blue, -latex] (vert311.south)--(vert301.north);

 \draw[style=semithick, green!50!black, -latex] (vert121.west)--(vert021.east);
 \draw[style=semithick, green!50!black, -latex] (vert111.west)--(vert011.east);
 \draw[style=semithick, green!50!black, -latex] (vert101.west)--(vert001.east);
 \draw[style=semithick, green!50!black, -latex] (vert221.west)--(vert121.east);
 \draw[style=semithick, green!50!black, -latex] (vert211.west)--(vert111.east);
 \draw[style=semithick, green!50!black, -latex] (vert201.west)--(vert101.east);
 \draw[style=semithick, green!50!black, -latex] (vert321.west)--(vert221.east);
 \draw[style=semithick, green!50!black, -latex] (vert311.west)--(vert211.east);
 \draw[style=semithick, green!50!black, -latex] (vert301.west)--(vert201.east);
 
 \draw[dashed, style=semithick, red, -latex] (vert000)--(vert001);
 \draw[dashed, style=semithick, red, -latex] (vert100)--(vert101);
 \draw[dashed, style=semithick, red, -latex] (vert200)--(vert201);
 \draw[dashed, style=semithick, red, -latex] (vert300)--(vert301);
 \draw[dashed, style=semithick, red, -latex] (vert010)--(vert011);
 \draw[dashed, style=semithick, red, -latex] (vert110)--(vert111);
 \draw[dashed, style=semithick, red, -latex] (vert210)--(vert211);
 \draw[dashed, style=semithick, red, -latex] (vert310)--(vert311);
 \draw[dashed, style=semithick, red, -latex] (vert020)--(vert021);
 \draw[dashed, style=semithick, red, -latex] (vert120)--(vert121);
 \draw[dashed, style=semithick, red, -latex] (vert220)--(vert221);
 \draw[dashed, style=semithick, red, -latex] (vert320)--(vert321);

\end{tikzpicture}

\vspace{.2in}
\label{grid}
}
\hspace{.5in}
\subfigure[A 1-skeleton of a 3-graph requiring factorization rules]{
\begin{tikzpicture}[scale=1.25]

 \node[inner sep=.5pt, circle, fill=black] (vertA) at (-1,0) [draw] {};%
        \node[inner  sep=4pt, anchor = south] at (vertA.north) {$\scriptstyle v$};%
        \node[inner sep=.5pt, circle, fill=black] (vertB) at (1,0) [draw] {};%
        \node[inner sep=4pt, anchor = south] at (vertB.north) {$\scriptstyle w$};%
        \draw[dotted, style=semithick, blue, -latex] (vertA.north east)%
            parabola[parabola height=0.5cm] (vertB.north west);%
        \node[inner sep=2pt, anchor = south] at (0,0.5) {$\scriptstyle{\color{blue}a}$};%
        \draw[dotted, style=semithick, blue, -latex] (vertB.south west)%
            parabola[parabola height=-0.5cm] (vertA.south east);%
        \node[inner sep=2pt, anchor = north] at (0,-0.5) {$\scriptstyle{\color{blue}b}$};%
        \draw[dotted, style=semithick, blue, -latex] (vertA.north east)%
            parabola[parabola height=0.85cm] (vertB.north west);%
        \node[inner sep=2pt, anchor = south] at (0,0.85) {$\scriptstyle{\color{blue}a'}$};%
        \draw[dotted, style=semithick, blue, -latex] (vertB.south west)%
            parabola[parabola height=-0.85cm] (vertA.south east);%
        \node[inner sep=2pt, anchor = north] at (0,-0.85) {$\scriptstyle{\color{blue}b'}$};%
        \draw[dashed, style=semithick, red, -latex] (vertA.north west) .. %
            controls (-1.25,0.25) and (-1.5,0.25) .. (-1.5,0) .. %
            controls (-1.5,-0.25) and (-1.25,-0.25) .. (vertA.south west);%
        \node[inner sep=2pt, anchor = east] at (-1.5,0) {$\scriptstyle{\color{red}c}$};%
        \draw[dashed, style=semithick, red, -latex]
        (vertB.north east) .. controls (1.25,0.25) and (1.5,0.25) .. (1.5,0) .. %
            controls (1.5,-0.25) and (1.25,-0.25) .. (vertB.south east);%
        \node[inner sep=2pt, anchor = west] at (1.5,0) {$\scriptstyle{\color{red}d}$};%
        \draw[style=semithick, green!50!black, -latex] (vertA.north west) .. %
            controls (-1.25,0.75) and (-2,0.5) .. (-2,0) .. %
            controls (-2,-0.5) and (-1.25,-0.75) .. (vertA.south west);%
        \node[inner sep=2pt, anchor = east] at (-2,0) {$\scriptstyle{\color{green!50!black}e}$};%
        \draw[style=semithick, green!50!black, -latex] (vertB.north east) .. %
            controls (1.25,0.75) and (2,0.5) .. (2,0) .. %
            controls (2,-0.5) and (1.25,-0.75) .. (vertB.south east);%
        \node[inner sep=2pt, anchor = west] at (2,0) {$\scriptstyle{\color{green!50!black}f}$};%


\end{tikzpicture}
\label{circles}
}
\caption{1-Skeletons of two different 3-graphs}
\end{figure}
In some cases, such as any $\Okm$, we can construct the entire $k$-graph from the 1-skeleton.  In other cases we need more information.  Consider the path $bd$ in Figure~\ref{circles} of shape $(0,1,1)$.  By the unique factorization property, $bd$ must factor uniquely as a ``red-blue" path.  So, $bd = \mu\nu$ with $d(\mu) = (0,0,1)$ and $d(\nu) = (0,1,0)$.  It cannot be determined from the 1-skeleton alone if the unique factorization we desire is $cb$ or $cb'$.  We need to specify factorization rules for each bi-colored path:
$$da = a'c \hspace{.75in}fa = a'e\hspace{.75in}bd = cb'$$$$bf = eb'\hspace{.75in}fd = df \hspace{.75in}ce = ec.
$$

For topological $k$-graphs, $\left(\L_1, d_1\right)$ and $\left(\L_2, d_2\right)$, a \emph{topological $k$-graph morphism} is a continuous degree preserving functor $x:\L_1 \rightarrow \L_2$.  That is, $d_2(x(\l)) = d_1(\l)$.  We are particularly concerned with the graph morphisms from the grid graphs.  The \emph{path space} of a topological $k$-graph $\L$ $$X_\L := \bigcup_{m \in \left(\NN \cup\{\infty\}\right)^k} \left\{x : \Omega_{k,m} \rightarrow \L \, |\, x \text{ is a graph morphism}\right\}$$ can be thought to include $\L$ since any finite path $\l$ uniquely determines a graph morphism $x_\l :\Omega_{k,d(\l)} \rightarrow \L$ such that $x(m,n) = \l(m,n)$ for any $m \leq n \leq d(\l)$.  We extend the idea of range and degree so that $r(x) = x(0,0)$ and $d(x) = m$ for $x: \Omega_{k,m} \rightarrow \L$.  In practice, we think of taking a particular grid graph and labeling the vertices and edges while following the structure of and factorization rules associated to the 1-skeleton of $\L$.  So, a path of shape $(\infty, \infty, \infty)$ with range $v$ in the 3-graph of Figure~\ref{circles} with the factorization rules given could look like that of Figure~\ref{infinite path}.

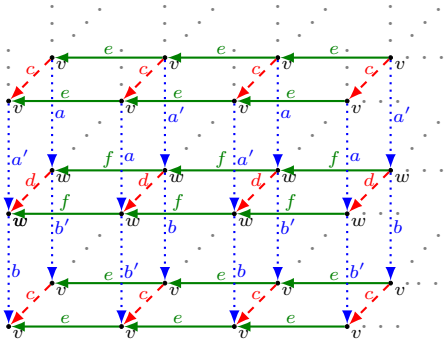
\begin{figure}[h]
\begin{center}

\begin{tikzpicture}[scale = 1.5]

 \node[inner sep=.5pt, circle, fill=black] (vert000) at (0,0,0) [draw] {};
 \node[inner sep=.5pt, circle, fill=black] (vert100) at (1,0,0) [draw] {};
 \node[inner sep=.5pt, circle, fill=black] (vert200) at (2,0,0) [draw] {};
 \node[inner sep=.5pt, circle, fill=black] (vert300) at (3,0,0) [draw] {};
 
 \node[inner sep=.5pt, circle, fill=black] (vert001) at (0,0,1) [draw] {};
 \node[inner sep=.5pt, circle, fill=black] (vert101) at (1,0,1) [draw] {};
 \node[inner sep=.5pt, circle, fill=black] (vert201) at (2,0,1) [draw] {};
 \node[inner sep=.5pt, circle, fill=black] (vert301) at (3,0,1) [draw] {};
 
 \node[inner sep=.5pt, circle, fill=black] (vert010) at (0,1,0) [draw] {};
 \node[inner sep=.5pt, circle, fill=black] (vert110) at (1,1,0) [draw] {};
 \node[inner sep=.5pt, circle, fill=black] (vert210) at (2,1,0) [draw] {};
 \node[inner sep=.5pt, circle, fill=black] (vert310) at (3,1,0) [draw] {};
 
 \node[inner sep=.5pt, circle, fill=black] (vert011) at (0,1,1) [draw] {};
 \node[inner sep=.5pt, circle, fill=black] (vert111) at (1,1,1) [draw] {};
 \node[inner sep=.5pt, circle, fill=black] (vert211) at (2,1,1) [draw] {};
 \node[inner sep=.5pt, circle, fill=black] (vert311) at (3,1,1) [draw] {}; 
 
 \node[inner sep=.5pt, circle, fill=black] (vert020) at (0,2,0) [draw] {};
 \node[inner sep=.5pt, circle, fill=black] (vert120) at (1,2,0) [draw] {};
 \node[inner sep=.5pt, circle, fill=black] (vert220) at (2,2,0) [draw] {};
 \node[inner sep=.5pt, circle, fill=black] (vert320) at (3,2,0) [draw] {};
 
 \node[inner sep=.5pt, circle, fill=black] (vert021) at (0,2,1) [draw] {};
 \node[inner sep=.5pt, circle, fill=black] (vert121) at (1,2,1) [draw] {};
 \node[inner sep=.5pt, circle, fill=black] (vert221) at (2,2,1) [draw] {};
 \node[inner sep=.5pt, circle, fill=black] (vert321) at (3,2,1) [draw] {};

 \draw[style=thick, green!50!black, -latex] (vert120.west)--(vert020.east);
 \draw[style=thick, green!50!black, -latex] (vert110.west)--(vert010.east);
 \draw[style=thick, green!50!black, -latex] (vert100.west)--(vert000.east);
 \draw[style=thick, green!50!black, -latex] (vert220.west)--(vert120.east);
 \draw[style=thick, green!50!black, -latex] (vert210.west)--(vert110.east);
 \draw[style=thick, green!50!black, -latex] (vert200.west)--(vert100.east);
 \draw[style=thick, green!50!black, -latex] (vert320.west)--(vert220.east);
 \draw[style=thick, green!50!black, -latex] (vert310.west)--(vert210.east);
 \draw[style=thick, green!50!black, -latex] (vert300.west)--(vert200.east);
 
 \draw[dotted, style=thick, blue, -latex] (vert020.south)--(vert010.north);
 \draw[dotted, style=thick, blue, -latex] (vert120.south)--(vert110.north);
 \draw[dotted, style=thick, blue, -latex] (vert220.south)--(vert210.north);
 \draw[dotted, style=thick, blue, -latex] (vert320.south)--(vert310.north);
 \draw[dotted, style=thick, blue, -latex] (vert010.south)--(vert000.north);
 \draw[dotted, style=thick, blue, -latex] (vert110.south)--(vert100.north);
 \draw[dotted, style=thick, blue, -latex] (vert210.south)--(vert200.north);
 \draw[dotted, style=thick, blue, -latex] (vert310.south)--(vert300.north);
 
 \draw[dotted, style=thick, blue, -latex] (vert021.south)--(vert011.north);
 \draw[dotted, style=thick, blue, -latex] (vert121.south)--(vert111.north);
 \draw[dotted, style=thick, blue, -latex] (vert221.south)--(vert211.north);
 \draw[dotted, style=thick, blue, -latex] (vert321.south)--(vert311.north);
 \draw[dotted, style=thick, blue, -latex] (vert011.south)--(vert001.north);
 \draw[dotted, style=thick, blue, -latex] (vert111.south)--(vert101.north);
 \draw[dotted, style=thick, blue, -latex] (vert211.south)--(vert201.north);
 \draw[dotted, style=thick, blue, -latex] (vert311.south)--(vert301.north);

 \draw[style=thick, green!50!black, -latex] (vert121.west)--(vert021.east);
 \draw[style=thick, green!50!black, -latex] (vert111.west)--(vert011.east);
 \draw[style=thick, green!50!black, -latex] (vert101.west)--(vert001.east);
 \draw[style=thick, green!50!black, -latex] (vert221.west)--(vert121.east);
 \draw[style=thick, green!50!black, -latex] (vert211.west)--(vert111.east);
 \draw[style=thick, green!50!black, -latex] (vert201.west)--(vert101.east);
 \draw[style=thick, green!50!black, -latex] (vert321.west)--(vert221.east);
 \draw[style=thick, green!50!black, -latex] (vert311.west)--(vert211.east);
 \draw[style=thick, green!50!black, -latex] (vert301.west)--(vert201.east);
 
 \draw[dashed, style=thick, red, -latex] (vert000)--(vert001);
 \draw[dashed, style=thick, red, -latex] (vert100)--(vert101);
 \draw[dashed, style=thick, red, -latex] (vert200)--(vert201);
 \draw[dashed, style=thick, red, -latex] (vert300)--(vert301);
 \draw[dashed, style=thick, red, -latex] (vert010)--(vert011);
 \draw[dashed, style=thick, red, -latex] (vert110)--(vert111);
 \draw[dashed, style=thick, red, -latex] (vert210)--(vert211);
 \draw[dashed, style=thick, red, -latex] (vert310)--(vert311);
 \draw[dashed, style=thick, red, -latex] (vert020)--(vert021);
 \draw[dashed, style=thick, red, -latex] (vert120)--(vert121);
 \draw[dashed, style=thick, red, -latex] (vert220)--(vert221);
 \draw[dashed, style=thick, red, -latex] (vert320)--(vert321);

\IoIIIdimdots(3.15/3.25/3.4,0,0)
            \IoIIIdimdots(3.15/3.3/3.45,1,0)
            \IoIIIdimdots(3.15/3.3/3.45,2,0)
            \IoIIIdimdots(3.15/3.3/3.45,0,1)
            \IoIIIdimdots(3.15/3.3/3.45,1,1)
            \IoIIIdimdots(3.15/3.3/3.45,2,1)
            \IIoIIIdimdots(0,2.15/2.3/2.45,0)
            \IIoIIIdimdots(1,2.15/2.3/2.45,0)
            \IIoIIIdimdots(2,2.15/2.3/2.45,0)
            \IIoIIIdimdots(3,2.15/2.3/2.45,0)
            \IIoIIIdimdots(0,2.15/2.3/2.45,1)
            \IIoIIIdimdots(1,2.15/2.3/2.45,1)
            \IIoIIIdimdots(2,2.15/2.3/2.45,1)
            \IIoIIIdimdots(3,2.15/2.3/2.45,1)
            \IIIoIIIdimdots(0,0,-0.5/-0.8/-1.1)
            \IIIoIIIdimdots(1,0,-0.5/-0.8/-1.1)
            \IIIoIIIdimdots(2,0,-0.5/-0.8/-1.1)
            \IIIoIIIdimdots(3,0,-0.5/-0.8/-1.1)
            \IIIoIIIdimdots(0,1,-0.5/-0.8/-1.1)
            \IIIoIIIdimdots(1,1,-0.5/-0.8/-1.1)
            \IIIoIIIdimdots(2,1,-0.5/-0.8/-1.1)
            \IIIoIIIdimdots(3,1,-0.5/-0.8/-1.1)
            \IIIoIIIdimdots(0,2,-0.5/-0.8/-1.1)
            \IIIoIIIdimdots(1,2,-0.5/-0.8/-1.1)
            \IIIoIIIdimdots(2,2,-0.5/-0.8/-1.1)
            \IIIoIIIdimdots(3,2,-0.5/-0.8/-1.1)

     \node[inner sep = 1.25pt, anchor = north west] at (0, 0, 0) {\tiny{$v$}};
          \node[inner sep = 1.25pt, anchor = north west] at (0, 0, 1) {\tiny{$v$}};
     \node[inner sep = 1.25pt, anchor = north west] at (1, 0, 1) {\tiny{$v$}};
     \node[inner sep = 1.25pt, anchor = north west] at (0, 1, 1) {\tiny{$w$}};
     
     \node[inner sep = 1.25pt, anchor = north west] at (1, 0, 0) {\tiny{$v$}};
     \node[inner sep = 1.25pt, anchor = north west] at (2, 0, 0) {\tiny{$v$}};
     \node[inner sep = 1.25pt, anchor = north west] at (2, 0, 1) {\tiny{$v$}};
     \node[inner sep = 1.25pt, anchor = north west] at (3, 0, 0) {\tiny{$v$}};
     \node[inner sep = 1.25pt, anchor = north west] at (3, 0, 1) {\tiny{$v$}};     
     \node[inner sep = 1.25pt, anchor = north west] at (2, 2, 0) {\tiny{$v$}};
     \node[inner sep = 1.25pt, anchor = north west] at (2, 2, 1) {\tiny{$v$}};
     \node[inner sep = 1.25pt, anchor = north west] at (3, 2, 0) {\tiny{$v$}};
     \node[inner sep = 1.25pt, anchor = north west] at (3, 2, 1) {\tiny{$v$}};
     \node[inner sep = 1.25pt, anchor = north west] at (0, 2, 0) {\tiny{$v$}};
     \node[inner sep = 1.25pt, anchor = north west] at (0, 2, 1) {\tiny{$v$}};
     \node[inner sep = 1.25pt, anchor = north west] at (1, 2, 0) {\tiny{$v$}};
     \node[inner sep = 1.25pt, anchor = north west] at (1, 2, 1) {\tiny{$v$}};
     
     \node[inner sep = 1.25pt, anchor = north west] at (2, 1, 0) {\tiny{$w$}};
     \node[inner sep = 1.25pt, anchor = north west] at (2, 1, 1) {\tiny{$w$}};
     \node[inner sep = 1.25pt, anchor = north west] at (3, 1, 0) {\tiny{$w$}};
     \node[inner sep = 1.25pt, anchor = north west] at (3, 1, 1) {\tiny{$w$}};
     \node[inner sep = 1.25pt, anchor = north west] at (0, 1, 0) {\tiny{$w$}};
     \node[inner sep = 1.25pt, anchor = north west] at (0, 1, 1) {\tiny{$w$}};
     \node[inner sep = 1.25pt, anchor = north west] at (1, 1, 0) {\tiny{$w$}};
     \node[inner sep = 1.25pt, anchor = north west] at (1, 1, 1) {\tiny{$w$}};
     
     \node[inner sep=.75pt, anchor= west] at (0, .5, 1) {\tiny{\color{blue} $b$}};
     \node[inner sep=.75pt, anchor= west] at (0, .5, 0) {\tiny{\color{blue} $b'$}};
     \node[inner sep=.75pt, anchor= west] at (1, .5, 1) {\tiny{\color{blue} $b'$}};
     \node[inner sep=.75pt, anchor= west] at (1, .5, 0) {\tiny{\color{blue} $b$}};
     \node[inner sep=.75pt, anchor= west] at (2, .5, 1) {\tiny{\color{blue} $b$}};
     \node[inner sep=.75pt, anchor= west] at (2, .5, 0) {\tiny{\color{blue} $b'$}};
     \node[inner sep=.75pt, anchor= west] at (3, .5, 1) {\tiny{\color{blue} $b'$}};
     \node[inner sep=.75pt, anchor= west] at (3, .5, 0) {\tiny{\color{blue} $b$}};
     
     \node[inner sep=.75pt, anchor= west] at (0, 1.5, 1) {\tiny{\color{blue} $a'$}};
     \node[inner sep=.75pt, anchor= west] at (0, 1.5, 0) {\tiny{\color{blue} $a$}};
     \node[inner sep=.75pt, anchor= west] at (1, 1.5, 1) {\tiny{\color{blue} $a$}};
     \node[inner sep=.75pt, anchor= west] at (1, 1.5, 0) {\tiny{\color{blue} $a'$}};
     \node[inner sep=.75pt, anchor= west] at (2, 1.5, 1) {\tiny{\color{blue} $a'$}};
     \node[inner sep=.75pt, anchor= west] at (2, 1.5, 0) {\tiny{\color{blue} $a$}};
     \node[inner sep=.75pt, anchor= west] at (3, 1.5, 1) {\tiny{\color{blue} $a$}};
     \node[inner sep=.75pt, anchor= west] at (3, 1.5, 0) {\tiny{\color{blue} $a'$}};
     
     \node[inner sep=1pt, anchor= south] at (.5, 0, 1) {\textbf{\tiny{\color{green!50!black} $e$}}};
     \node[inner sep=1pt, anchor= south] at (.5, 0, 0) {\textbf{\tiny{\color{green!50!black} $e$}}};
     \node[inner sep=1pt, anchor= south] at (1.5, 0, 1) {\textbf{\tiny{\color{green!50!black} $e$}}};
     \node[inner sep=1pt, anchor= south] at (1.5, 0, 0) {\textbf{\tiny{\color{green!50!black} $e$}}};                 
     \node[inner sep=1pt, anchor= south] at (2.5, 0, 1) {\textbf{\tiny{\color{green!50!black} $e$}}};
     \node[inner sep=1pt, anchor= south] at (2.5, 0, 0) {\textbf{\tiny{\color{green!50!black} $e$}}};    
     \node[inner sep=1pt, anchor= south] at (.5, 2, 1) {\textbf{\tiny{\color{green!50!black} $e$}}};
     \node[inner sep=1pt, anchor= south] at (.5, 2, 0) {\textbf{\tiny{\color{green!50!black} $e$}}};
     \node[inner sep=1pt, anchor= south] at (1.5, 2, 1) {\textbf{\tiny{\color{green!50!black} $e$}}};
     \node[inner sep=1pt, anchor= south] at (1.5, 2, 0) {\textbf{\tiny{\color{green!50!black} $e$}}};                 
     \node[inner sep=1pt, anchor= south] at (2.5, 2, 1) {\textbf{\tiny{\color{green!50!black} $e$}}};
     \node[inner sep=1pt, anchor= south] at (2.5, 2, 0) {\textbf{\tiny{\color{green!50!black} $e$}}};    
     
     \node[inner sep=1pt, anchor= south] at (.5, 1, 1) {\textbf{\tiny{\color{green!50!black} $f$}}};
     \node[inner sep=1pt, anchor= south] at (.5, 1, 0) {\textbf{\tiny{\color{green!50!black} $f$}}};
     \node[inner sep=1pt, anchor= south] at (1.5, 1, 1) {\textbf{\tiny{\color{green!50!black} $f$}}};
     \node[inner sep=1pt, anchor= south] at (1.5, 1, 0) {\textbf{\tiny{\color{green!50!black} $f$}}};                 
     \node[inner sep=1pt, anchor= south] at (2.5, 1, 1) {\textbf{\tiny{\color{green!50!black} $f$}}};
     \node[inner sep=1pt, anchor= south] at (2.5, 1, 0) {\textbf{\tiny{\color{green!50!black} $f$}}};    
     
     \node[inner sep=2pt, anchor= south] at (0, 0 , .5) {\tiny{\color{red} $c$}};
     \node[inner sep=2pt, anchor= south] at (1, 0 , .5) {\tiny{\color{red} $c$}};
     \node[inner sep=2pt, anchor= south] at (2, 0 , .5) {\tiny{\color{red} $c$}};
     \node[inner sep=2pt, anchor= south] at (3, 0 , .5) {\tiny{\color{red} $c$}};
     
     \node[inner sep=2pt, anchor= south] at (0, 1 , .5) {\tiny{\color{red} $d$}};
     \node[inner sep=2pt, anchor= south] at (1, 1 , .5) {\tiny{\color{red} $d$}};
     \node[inner sep=2pt, anchor= south] at (2, 1 , .5) {\tiny{\color{red} $d$}};
     \node[inner sep=2pt, anchor= south] at (3, 1 , .5) {\tiny{\color{red} $d$}};
     
     \node[inner sep=2pt, anchor= south] at (0, 2 , .5) {\tiny{\color{red} $c$}};
     \node[inner sep=2pt, anchor= south] at (1, 2 , .5) {\tiny{\color{red} $c$}};
     \node[inner sep=2pt, anchor= south] at (2, 2 , .5) {\tiny{\color{red} $c$}};
     \node[inner sep=2pt, anchor= south] at (3, 2 , .5) {\tiny{\color{red} $c$}};

\end{tikzpicture}

\end{center}
\caption{An Infinite Path in a 3-Graph}
\label{infinite path}
\end{figure}

In order to define aperiodic paths in a topological $k$-graph we need the idea of the shift map.  The \emph{shift map} $\sigma^m$ on $X_\L$ assigns to $x$ the unique path $\sigma^mx$ in $X_\L$ which satisfies $d(\sigma^mx) = d(x) - m$ and $\sigma^mx(0, n) = x(m, m+n)$ for all $0 \leqslant n \leqslant d(x) - m$.  For $m \in \NN^k$, the shift map of degree $m$ removes a path of shape $m$ from the range end of $x$.  We say a path $x \in X_\L$ is \emph{aperiodic} if for any $m, n \in \NN^k$ $\sigma^mx = \sigma^nx$ implies that $m = n$.  The path of Figure~\ref{infinite path} is periodic, as $\sigma^{(2, 0,1)}x = x$.

The idea of a minimal common extension of two paths will be important in one of our formulations of aperiodicity and it is also necessary in our definition of a boundary path.  For paths $\mu, \nu \in \L$, we say $\l$ is a \emph{common extension of} $\mu$ and $\nu$ if we can factor $\l = \mu\mu'$ and $\l = \nu\nu'$ for some $\mu', \nu' \in \L$.  We consider $\l$ to be a \emph{minimal common extension} if it also satisfies $d(\l) = d(\mu) \vee d(\nu)$.  We denote by $\MCE(\mu, \nu)$ the set of minimal common extensions of $\mu$ and $\nu$ and for subsets $X, Y \subset \L$ define $$\MCE(X,Y) := \bigcup_{\mu\in X, \nu \in Y} \MCE(\mu, \nu).$$A topological $k$-graph $\L$ is \emph{compactly aligned} if whenever $X,Y \subset \L$ are compact, then $\MCE(X,Y)$ is also compact.

For a vertex $v \in \L^0$, we say a subset $E \subset \L$ is \emph{compact exhaustive for $v$} if $E$ is compact, $r(E)$ is a neighborhood of $v$, and for all $\l \in r(E)\L$ there exists a $\mu \in E$ such that $\MCE(\l,\mu) \neq \emptyset$.  A path $x \in X_\L$ is a \emph{boundary path} if for any $m \in \NN^k$ with $m \leq d(x)$ and any set $E$ which is compact exhaustive set for $x(m,m)$, there exists a $\l \in E$ such that $x(m, m + d(\l)) = \l$.  We write $\partial\L$ for the set of all boundary paths in $X_\L$.  As in discrete $k$-graphs the boundary paths are a generalization of the infinite paths of directed graphs, \cite[Lemma 4.22]{MyThesis}.

Under the condition that $\L$ is compactly aligned, we can build a topological groupoid $\mathcal{G}_\L$ called the \emph{boundary path groupoid} which has $\partial\L$ as its unit space and can be endowed with a locally compact Hausdorff topology.  The C$^*$-algebra of $\L$, $C^*(\L)$, is the full groupoid C$^*$-algebra of the boundary path groupoid, $C^*(\mathcal{G}_\L)$.  As we will not need the details of the construction of the C$^*$-algebra to understand the results of this paper, we refer the reader to \cite{Yeend1} for the formulation of this topology as well as a more in depth discussion.  However, to demonstrate the importance of aperiodicity, we conclude the background information with a result of Yeend.

\begin{prop}\cite[Theorem 5.2]{Yeend1} For a compactly aligned topological $k$-graph $(\L, d)$, the boundary path groupoid $\mathcal{G}_\L$ is topologically principal if and only if for every open set $V \subset \L^0$ there exists an $x \in V\partial\L$ which is aperiodic. \end{prop}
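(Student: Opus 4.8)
The plan is to translate ``topologically principal'' into a density statement about the unit space $\partial\L$ of $\mathcal{G}_\L$, and then to match that density statement with the stated condition on open subsets of $\L^0$.

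\textbf{Reduction to density of aperiodic boundary paths.} I would begin from the concrete description of $\mathcal{G}_\L$ as the set of triples $(x,p,y)$ with $x,y\in\partial\L$, $p\in\ZZ^k$, and $\sigma^m x=\sigma^n y$ for some $m,n\in\NN^k$ with $m-n=p$; here the unit space is $\{(x,0,x):x\in\partial\L\}$, identified with $\partial\L$, with $r(x,p,y)=x$ and $s(x,p,y)=y$. The isotropy group at $x$ is $\{(x,p,x):\sigma^m x=\sigma^n x\text{ for some }m-n=p\}$, which is trivial exactly when $\sigma^m x=\sigma^n x$ forces $m=n$, i.e. exactly when $x$ is aperiodic. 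Since $\mathcal{G}_\L$ is topologically principal precisely when its units with trivial isotropy are dense in $\partial\L$, the proposition reduces to: the aperiodic boundary paths are dense in $\partial\L$ if and only if every nonempty open $V\subseteq\L^0$ contains the range of an aperiodic boundary path. One direction is immediate: every nonempty open $V\subseteq\L^0$ meets $r(\partial\L)$, so $V\partial\L=\{x\in\partial\L:r(x)\in V\}$ is a nonempty open subset of $\partial\L$ (\cite{Yeend1}), and if the aperiodic boundary paths are dense they meet it.

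\textbf{The $\L^0$-condition implies density.} For the converse I would show the aperiodic boundary paths meet every nonempty basic open subset of $\partial\L$. Using Yeend's description of the topology on $\partial\L$, it suffices to produce an aperiodic element of each nonempty cylinder $Z(U)$ with $U\subseteq\L^n$ open. Choose $\l\in U$; since $s$ is a local homeomorphism, shrink $U$ to an open neighbourhood $U'\subseteq U$ of $\l$ on which $s$ restricts to a homeomorphism onto the open set $V:=s(U')\subseteq\L^0$. By hypothesis there is an aperiodic $y\in V\partial\L$, and since $r(y)\in V=s(U')$ there is a unique $\l'\in U'$ with $s(\l')=r(y)$. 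Then $\l' y$ is a boundary path (prepending a finite path to a boundary path yields a boundary path, \cite{Yeend1, MyThesis}), it lies in $Z(U)$ because $\l'\in U'\subseteq U$, and a routine shift computation --- absorbing $\l'$ via $\sigma^{d(\l')}(\l' y)=y$ (see \cite{MyThesis}) --- shows that aperiodicity of $y$ forces aperiodicity of $\l' y$. Hence $Z(U)$ contains an aperiodic boundary path.

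\textbf{Main obstacle.} The genuine care is needed in this last step. In Yeend's topology a basic open set of $\partial\L$ is obtained from a cylinder by removing finitely many sub-cylinders, so one must choose $\l$ inside such a set in a way that guarantees no extension of $\l'$ re-enters any of the removed sub-cylinders, and one must check that $\l' y$ satisfies the compact-exhaustive condition at the vertices interior to $\l'$ so that it really is a boundary path. The remaining ingredients --- the isotropy computation, the easy direction, and the shift computation showing $\l' y$ is aperiodic --- are routine.
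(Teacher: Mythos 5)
This proposition is quoted in the paper as background from Yeend (\cite[Theorem 5.2]{Yeend1}) and is not proved there, so there is no in-paper argument to compare against; your proposal has to stand on its own against Yeend's original proof. Your overall architecture is the right one and is the standard route: realize $\mathcal{G}_\L$ as $\{(x,m-n,y):\sigma^mx=\sigma^ny\}$, observe that the isotropy group at a unit $x$ is trivial exactly when $x$ is aperiodic, and reduce the statement to ``aperiodic boundary paths are dense in $\partial\L$ if and only if every nonempty open $V\subseteq\L^0$ sees one.'' The easy direction is fine granted Yeend's lemma that $v\partial\L\neq\emptyset$ for every vertex $v$, your shift computation ($\sigma^{m+q}(\l'y)=\sigma^{m+q-d(\l')}y$ for suitable $q$) correctly shows that prepending a finite path preserves aperiodicity, and $\L\,\partial\L\subseteq\partial\L$ is a citable lemma rather than something you need to re-verify.

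The genuine gap is exactly the one you flag and then set aside as an ``obstacle'': the cylinders $Z(U)$ with $U$ open in some $\L^n$ do \emph{not} form a basis for the topology of $\partial\L$. The basic open sets have the form $Z(U)\setminus Z(K)$ with $K$ compact (in the discrete case $Z(\mu)\setminus\bigcup_{\nu\in G}Z(\mu\nu)$ with $G$ finite), and these extra sets are needed precisely so that $\partial\L$ is locally compact when $\L$ is not row-finite. Density therefore requires an aperiodic path in every nonempty $Z(U)\setminus Z(K)$, and your construction cannot reach the points of such a set that lie in it ``by reason of degree'': if $x\in Z(U)\setminus Z(K)$ because $d(x)\not\geq q$ where $K\subseteq\L^q$ (for instance $x$ is a singular vertex receiving infinitely many edges), then no full cylinder $Z(U')$ containing $x$ sits inside $Z(U)\setminus Z(K)$, and prepending a finite path to an arbitrary aperiodic $y$ supplied by the hypothesis may well land back in $Z(K)$. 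Closing this requires a case analysis that is the real content of the theorem: one useful observation is that a boundary path whose degree is finite in every coordinate is automatically aperiodic (since $d(\sigma^mx)=d(\sigma^nx)$ forces $m=n$ in those coordinates), which disposes of some of these cases for free, but the mixed case --- some coordinates finite, some infinite --- still needs an argument you have not supplied.
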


See \cite{RSWY}, for structure and simplicity arguments relying on the groupoid being topologically principal and amenable.  The authors use groupoid techniques to prove the equivalence of \eqref{Y} and \eqref{W} of the main theorem of this paper.  Theorem~\ref{BigTheorem} remains the cleaner and more accessible proof of the equivalence of the two notions of aperiodicity

\section{Results}

The condition that every open set of vertices must have an aperiodic path that terminates in that set is the natural extension of aperiodicity conditions for topological graphs and discrete $k$-graphs.  Yeend introduced the condition and referred to it as Condition (A).  Our main result, Theorem~\ref{BigTheorem}, gives two new equivalent conditions to Yeend's Condition (A).

\begin{thm}\label{BigTheorem}

Let $(\L, d)$ be a compactly aligned topological $k$-graph with no sources.  Then the following conditions are equivalent.
	
	\begin{enumerate}[(A)]
	
		\item  \label{Y}For any open set $V \subset \L^0$ there exists an aperiodic path $x \in V \partial\L$.
		\item  \label{F}For any open set $V \subset \L^0$ and any pair $m \neq n \in \NN^k$ there exists a path $\l_{V, m, n} \in V\L$ such that $d(\l) \geq m \vee n$ and 
		\begin{equation}\label{star}\tag{$\star$}
			\l(m, m + d(\l) - (m \vee n)) \neq \l(n, n + d(\l) - (m \vee n)).
		\end{equation}
		\item  \label{W}For any distinct pair of open sets $X, Y \subset \L$ such that $s(X) = s(Y)$ and both $s|_X$ and $s|_Y$ are homeomorphisms there exists $\tau \in s(X)\L$ such that $\MCE(X\tau, Y\tau) = \emptyset$.
		
	\end{enumerate}

\end{thm}

It is not difficult to see that in the discrete case, (if $\Obj(\L)$ and $\Mor(\L)$ both have the discrete topology) then Conditions \eqref{Y} -- \eqref{W} above reduce to \cite[Definition 4.3]{KP}, Condition (iv) of \cite[Lemma 3.2]{RandS}, and \cite[Definition 3.1]{LS} respectively.  This special case gives the following corollary.  The equivalence that appears in Corollary~\ref{discrete} is already known, as each condition is a necessary condition for simplicity of the associated C$^*$-algebra.  The direct proof of this equivalence that follows from the proof of Theorem~\ref{BigTheorem} is new, and an important outcome of this work.

\begin{cor}\label{discrete}

Let $(\L, d)$ be a discrete finitely aligned $k$-graph with no sources.  Then the following are equivalent.

	\begin{enumerate}[(i)]
	
	\item  For each $v \in \L^0$ there exists and aperiodic path $x \in v\partial\L$.
	\item  For each $v \in \L^0$ and each $m \neq n \in \NN^k$ there exists a path $\l \in v\L$ such that $d(\l) \geq m \vee n$ and which satisfies \eqref{star}.
	\item   For every pair of distinct paths $\alpha, \beta \in \L$ with $s(\alpha) = s(\beta)$ there exists a path $\tau \in s(\alpha)\L$ such that $\MCE(\alpha\tau, \beta\tau) = \emptyset$.

	\end{enumerate}

\end{cor}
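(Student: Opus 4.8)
The plan is to obtain Corollary~\ref{discrete} directly from Theorem~\ref{BigTheorem} by specializing to the discrete setting and invoking the theorem once for each singleton open set $V=\{v\}$, $v\in\L^0$. First I would record the translation between the two pictures. When $\Obj(\L)$ and $\Mor(\L)$ carry the discrete topology a subset is compact exactly when it is finite, so ``compactly aligned'' becomes ``finitely aligned'', and a compact exhaustive set for $v$ is exactly a finite $E$ with $v\in r(E)$ and, for every $\l\in r(E)\L$, some $\mu\in E$ with $\MCE(\l,\mu)\neq\emptyset$; with this the boundary paths coincide with the infinite paths, $\partial\L=\L^\infty$, which is the discrete case of the fact recalled in the Background that $\partial\L$ generalizes $\L^\infty$. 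Finally each $\{v\}$ is open and $\{v\}\partial\L=v\L^\infty$. Under this dictionary Condition~\eqref{Y} for $V=\{v\}$ says precisely that $v\L^\infty$ contains an aperiodic path, and Condition~\eqref{F} for $V=\{v\}$ says precisely that for every $m\neq n$ there is $\l\in v\L$ with $d(\l)\geq m\vee n$ satisfying~\eqref{star}; quantifying over $v\in\L^0$ these are exactly (i) and (ii), so Theorem~\ref{BigTheorem} yields (i)$\Leftrightarrow$(ii) immediately.

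It remains to connect (iii) to Condition~\eqref{W}. The point requiring care is that (iii) is a statement about \emph{all} pairs $\alpha\neq\beta$ with $s(\alpha)=s(\beta)$, whereas~\eqref{W} asserts the existence of one such pair at one vertex, so I would work through contrapositives. Note first that most pairs impose no condition: if $\alpha\neq\beta$ with $s(\alpha)=s(\beta)$ and either $r(\alpha)\neq r(\beta)$ or $d(\alpha)=d(\beta)$, then already $\MCE(\alpha,\beta)=\emptyset$ (no path has two distinct ranges, and no path of degree $d(\alpha)$ can equal both $\alpha$ and $\beta$), so $\tau=s(\alpha)$ witnesses (iii) for that pair. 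Hence a failure of (iii) yields a pair $\alpha\neq\beta$ with a common source $v_0$, a common range $w$, and $p:=d(\alpha)\neq d(\beta)=:q$, such that $\MCE(\alpha\tau,\beta\tau)\neq\emptyset$ for every $\tau\in v_0\L$. Taking $\tau=y(0,N)$ for $y\in v_0\L^\infty$ and letting $N$ range, a finite-alignment/limiting argument (two boundary paths with the same range that are compatible at every finite stage must coincide) forces $\alpha y=\beta y$; writing $x=\alpha y=\beta y$ we get $\sigma^{p}x=\sigma^{q}x=y$, hence $\sigma^{(p\vee q)-p}y=\sigma^{p\vee q}x=\sigma^{(p\vee q)-q}y$ with $(p\vee q)-p\neq(p\vee q)-q$, so every $y\in v_0\L^\infty$ is periodic and (i) fails at $v_0$.

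For the reverse implication I would mirror the contrapositive of the implication \eqref{Y}$\Rightarrow$\eqref{W} from the proof of Theorem~\ref{BigTheorem}: if (i) fails, choose $v$ with $v\L^\infty$ containing no aperiodic path, so that \eqref{Y}, and hence \eqref{W}, fails for $V=\{v\}$; when $v$ emits distinct paths this directly says every pair of distinct paths out of $v$ is inseparable, hence contradicts (iii), and when $\L v=\{v\}$ one instead transports the periodicity of a path in $v\L^\infty$ forward to a vertex that does emit distinct paths and produces the inseparable pair there. I expect this last bookkeeping to be the main obstacle: one must manufacture an \emph{explicit} finite inseparable pair by following the periodicity of an infinite path down to the level at which it becomes a genuine cycle, and one must check that the whole argument --- the discrete shadow of the proof of Theorem~\ref{BigTheorem} --- goes through with no appeal to simplicity of $C^*(\L)$. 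With the two directions in hand (i)$\Leftrightarrow$(iii), and together with the first paragraph this gives (i)$\Leftrightarrow$(ii)$\Leftrightarrow$(iii).
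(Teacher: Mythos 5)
Your reduction of (i) and (ii) to Conditions \eqref{Y} and \eqref{F} of Theorem~\ref{BigTheorem} with $V=\{v\}$ is exactly what the paper intends, and your direct argument for $\neg$(iii)$\Rightarrow\neg$(i) is correct and complete: an inseparable pair $\alpha\neq\beta$ must have $r(\alpha)=r(\beta)$ and $p:=d(\alpha)\neq d(\beta)=:q$, and for any $y\in v_0\L^\infty$ a common extension of $\alpha\, y(0,N)$ and $\beta\, y(0,N)$ agrees with both $\alpha y$ and $\beta y$ up to degree $(p\wedge q)+N$; these degrees are cofinal in $\NN^k$, so $\alpha y=\beta y$ and your shift identity makes every $y\in v_0\L^\infty$ periodic. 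You are also right to distrust a verbatim specialization of \eqref{W}: as written it is existential over pairs $\alpha,\beta$ while (iii) is universal, and the paper's own proof of \eqref{W}$\Rightarrow$\eqref{F} silently invokes the universal reading (it applies \eqref{W} to a specific constructed pair $\mu,\nu$). So this half of your argument is a valid, self-contained alternative to specializing the chain \eqref{Y}$\Rightarrow$\eqref{F}$\Rightarrow$\eqref{W}.

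The gap is the converse, $\neg$(i)$\Rightarrow\neg$(iii), which you explicitly defer and whose sketch does not go through as described. Knowing only that every $x\in v\L^\infty$ is periodic hands you, for each such $x$ with $\sigma^mx=\sigma^nx=:y$, the pair $\alpha=x(0,m)$, $\beta=x(0,n)$ satisfying $\alpha y=\beta y$; but this guarantees $\MCE(\alpha\tau,\beta\tau)\neq\emptyset$ only for $\tau$ lying along $y$, not for arbitrary $\tau\in s(\alpha)\L$, so ``following the periodicity down to a genuine cycle'' does not by itself produce a pair violating (iii). What you actually need is (iii)$\Rightarrow$(ii), after which your (ii)$\Rightarrow$(i) closes the cycle, and this is precisely the discrete shadow of the paper's proof of \eqref{W}$\Rightarrow$\eqref{F}: given $v$ and $m\neq n$, choose $\l_0\in v\L^{m\vee n}$ (the neighborhoods $U$ and $U'$ collapse to the singleton $\{\l_0\}$ in the discrete topology), set $\mu=\l_0(m,m\vee n)$ and $\nu=\l_0(n,m\vee n)$, which are distinct because their degrees $(m\vee n)-m$ and $(m\vee n)-n$ differ, apply (iii) to obtain $\tau_0$ with $\MCE(\mu\tau_0,\nu\tau_0)=\emptyset$, and check as in the paper that $\l=\l_0\tau_0\tau_1$ satisfies \eqref{star} for any $\tau_1\in s(\tau_0)\L^{m\vee n}$. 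Replacing your deferred converse with this specialization removes the obstacle and recovers the paper's intended proof, which is that the statement \emph{and proof} of Theorem~\ref{BigTheorem} specialize wholesale to the discrete setting with $V$ ranging over singletons.
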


The proof of the theorem requires an important lemma.  Lemma \ref{TubeLemma} is an extension of Lemma 5.6 of \cite{Katsura1} which shows that near a path which satisfies \eqref{star} are a lot of other paths which also satisfy \eqref{star}.  This allows us to extend the proof of the equivalence of (i) and (iv) in Lemma 3.2 of \cite{RandS} to topological $k$-graphs.

\begin{lemma}\label{TubeLemma}

Let $V$ be a nonempty open subset of $\L^0$, $m \neq n \in \NN^k$, and $\l \in V\L$.  If $\l$ satisfies \eqref{star} for $m$ and $n$, then there exists a compact neighborhood $E \subset V\L^{d(\l)}$of $\l$ such that every $\mu \in E$ satisfies \eqref{star}.

\end{lemma}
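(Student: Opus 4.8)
The plan is to exhibit \eqref{star} as the assertion that two continuous maps defined on $\L^{d(\l)}$ take different values at a path, separate those values at $\l$ using the Hausdorff property, and then shrink to a compact neighborhood using local compactness.

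Write $p = d(\l)$ and $q = m \vee n$. The condition \eqref{star} presupposes $p \geq q$, and since $q \geq m$ and $q \geq n$ we have $m + (p - q) \leq p$ and $n + (p - q) \leq p$, so the two subpaths appearing in \eqref{star} are defined and lie in $\L^{p-q}$. First I would record that the slicing maps are continuous: for $0 \leq a \leq b \leq p$ the assignment $\phi_{a,b} \colon \L^p \to \L^{b-a}$, $\mu \mapsto \mu(a,b)$, is continuous. For fixed $a$, composition restricts to a map $\L^a \times_c \L^{p-a} \to \L^p$ which is a bijection by the unique factorization property and whose image is exactly $\L^p$ because $d$ is a functor; it is continuous and open since $\L^a \times_c \L^{p-a}$ is clopen in $\L \times_c \L$ (as $d$ is continuous into a discrete space) while composition is continuous and open. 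A continuous open bijection is a homeomorphism, so its inverse $\mu \mapsto \big(\mu(0,a),\, \mu(a,p)\big)$ is continuous; composing with coordinate projections and iterating over slices gives continuity of every $\phi_{a,b}$.

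With this in hand, set $f = \phi_{m,\,m+p-q}$ and $g = \phi_{n,\,n+p-q}$, both continuous maps $\L^p \to \L^{p-q}$. A path $\mu \in \L^p$ satisfies \eqref{star} for $m$ and $n$ exactly when $f(\mu) \neq g(\mu)$, and by hypothesis $f(\l) \neq g(\l)$. Since $\L^{p-q}$ is Hausdorff there are disjoint open sets $U_1 \ni f(\l)$ and $U_2 \ni g(\l)$; and since $\l \in V\L$ we have $r(\l) \in V$. Hence
\[
	W := f^{-1}(U_1) \cap g^{-1}(U_2) \cap r^{-1}(V) \cap \L^p
\]
is an open subset of $\L^p$ containing $\l$, every $\mu \in W$ satisfies \eqref{star}, and $r(W) \subseteq V$, so $W \subseteq V\L^{d(\l)}$. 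Finally $\L^p = d^{-1}(p)$ is clopen in $\Mor(\L)$, hence itself locally compact and Hausdorff, so $\l$ has a compact neighborhood $E$ with $E \subseteq W$; this $E$ has all the required properties.

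The only part of this argument that is not purely formal is the continuity of the slicing maps $\phi_{a,b}$; this is exactly where the topological hypotheses on $\L$ enter (in the discrete case it is automatic), and it is the step I would expect to require the most care. Everything after that is a routine separation argument in a locally compact Hausdorff space.
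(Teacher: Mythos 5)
Your proof is correct, and it takes a genuinely different route from the paper's. The paper works ``forward'': it chooses disjoint compact neighborhoods $E_m$, $E_n$ of the two slices $\l(m, m+d(\l)-(m\vee n))$ and $\l(n, n+d(\l)-(m\vee n))$, together with compact neighborhoods of the remaining factors, and builds $E$ as the set of paths of shape $d(\l)$ that factor through these neighborhoods in both ways (intersected with $r_{d(\l)}^{-1}(F)$ for a compact $F\subset V$); compactness comes from composition carrying compact fibered products to compact sets, the neighborhood property from openness of composition, and \eqref{star} from the disjointness of $E_m$ and $E_n$. You instead work ``backward'': you first establish that the slicing maps $\mu\mapsto\mu(a,b)$ on $\L^{d(\l)}$ are continuous --- by observing that composition $\L^a\times_c\L^{d(\l)-a}\to\L^{d(\l)}$ is a continuous open bijection, hence a homeomorphism --- and then realize the set of paths satisfying \eqref{star} as the preimage of disjoint open sets under two continuous maps, shrinking to a compact neighborhood at the end by local compactness. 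Both arguments rest on the same underlying facts (unique factorization plus continuity and openness of composition), but yours isolates the continuity of factorization as an explicit, reusable statement and makes the remainder a routine separation argument, whereas the paper's construction keeps that continuity implicit and produces the compact neighborhood directly as a product of compact pieces. Either is a legitimate proof; yours is arguably the cleaner packaging of the same topological content.
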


	\begin{proof}
	
	Suppose $\l \in V\L$ satisfies \eqref{star}.  We can factor $\l$ in two ways:
\begin{center}
$\l = \l(0, m)\l(m, m + d(\l) - (m \vee n))\l(m + d(\l) - (m \vee n), d(\l))$ and\\
$\l = \l(0, n)\l(n, n + d(\l) - (m \vee n))\l(n + d(\l) - (m \vee n)), d(\l))$
\end{center}
Let $E_m$ and $E_n$ be disjoint compact neighborhoods of $\l(m, m+ d(\l) - (m \vee n))$ and $\l(n, n+ d(\l) - (m \vee n))$.  Also, let $E_1$, $E_2$, $E_3$, and $E_4$ be compact neighborhoods of $\l(0, m)$, $\l(m + d(\l) - (m \vee n), d(\l))$, $\l(0, n)$, and $\l(n + d(\l) - (m \vee n), d(\l))$ respectively.  We define the set $E'$ to be the paths in $\L^{d(\l)}$ that can be factored as paths in $E_1$, $E_m$ and $E_2$ as well as $E_3$, $E_n$, and $E_4$.  So, \begin{center}$E' := \{ \alpha\beta\gamma : (\alpha, \beta, \gamma) \in E_1 \times_c E_m \times_c E_2\} \cap \{\mu\nu\xi : (\mu, \nu, \xi) \in E_3 \times_c E_n \times_c E_4\} \subset \L^{d(\l)}$.\end{center}
Now, let $F \subset V$ be a compact neighborhood of $r(\l)$ and \begin{center}$ E = E' \cap r_{d(\l)}^{-1}(F)$.\end{center}
Then $E \subset V\L^{d(\l)}$ is a compact neighborhood of $\l$ with nonempty interior in which every element satisfies \eqref{star}.
	\end{proof}

\begin{proof}[Proof of Theorem~\ref{BigTheorem}]

We will show Condition \eqref{F} is equivalent to each of \eqref{Y} and \eqref{W}.

\eqref{Y} $\Longrightarrow$ \eqref{F}.  Fix and open set $V \subset \L^0$, and a pair $m \neq n \in \NN^k$, and suppose $x \in V\partial\L$ is aperiodic.  Then, $\sigma^mx \neq \sigma^nx$ and so for a sufficiently large $p \in \NN^k$ $$\sigma^mx(0,p) \neq \sigma^nx(0,p).$$  Let $\l = x(0, p + m \vee n)$, then
\begin{align*}
\l(m, m + d(\l) - m \vee n) & = x(m, m + (p + m \vee n) - m \vee n)\\
& = \sigma^mx(0, p)\\
& \neq \sigma^nx(0,p)\\
& = x(n, n + p + m \vee n - m \vee n)\\
& = \l(n, n + d(\l) - m \vee n).
\end{align*}
So, $d(\l) \geq m \vee n$ and $\l$ satisfies \eqref{star}.

\eqref{F} $\Longrightarrow$ \eqref{Y}.  Fix an open set $V \subseteq \L^0$ and let $\{(m_i, n_i)\}^{\infty}_{i = 1}$ be a listing of the elements in the set $\left\{(m, n) \in \NN^k \times \NN^k : m \neq n\right\}$.
Let $V_1 = V$, choose $\l_1 \in V_1\L$ such that $d(\l_1) \geq m_1 \vee n_1$ and \eqref{star} is satisfied for $m_1$ and $n_1$; choose a compact neighborhood $F_1$ of $\l_1$ by Lemma~\ref{TubeLemma}; and let $E_1 := F_1\partial\L = \left\{x \in \partial \L : x\left(0, d(\l_1)\right) \in F_1\right\}$.
Now, proceed inductively:
\begin{align*}
V_i & := \text{interior of } s(F_{i-1}),\\
\l_i & := \l_{V_i, m_i, n_i} \text{ satisfy \eqref{star} for $m_i$ and $n_i$},\\
F_i & := \text{a compact neighborhood of } \l_i \text{ given by Lemma~\ref{TubeLemma}, and}\\
E_i & := F_1\dots F_i\partial\L.
\end{align*}
Notice that each $E_i$ is compact in $\partial\L$ by Lemma 3.8 of \cite{Yeend2}:  Let $p = q = \sum_{j=1}^i d(\l_j)$ and $U = V = F_1\dots F_i \subset \L^{\sum_{j=1}^i d(\l_j)}$.  Then $E_i$ is exactly the set $\mathcal{Z}(U \ast_s V, p-q)$, which is compact in $\partial\L$.  Also note that the $E_i$'s are nested; $E_1 \supset E_2 \supset \dots$.  Now, let $$E := \bigcap_{i=1}^{\infty} E_i \subset \partial\L.$$
The set $E$ is non-empty, so take any $x \in E$.
Let $\mu_i \! = \! x\left(d(\l_1\l_2 \dots \l_{i-1}), d(\l_i)\right)$ for each $i \in \NN$.  Then by Lemma 4.1.3 of \cite{YeendThesis}, $x$ is the unique element in $X_\L$ such that $d(x) = \lim_{j \rightarrow \infty} d(\mu_1\mu_2\dots\mu_j)$ and $x(0, d(\mu_1\mu_2\dots\mu_j)) = \mu_1\mu_2\dots\mu_j$, so it makes sense to write$$x = \mu_1\mu_2\mu_3\dots, $$ with each $\mu_i \in F_i$.
Fix $m \neq n \in \NN^k$.   Then for some $i \in \NN$, $(m, n) = (m_i, n_i)$ in the listing above.  So,
\begin{align*}
\sigma^mx\!\!\left(\sum_{j=1}^{i-1} d(\l_j), \sum_{j=1}^i d(\l_j) - (m \vee n)\!\!\right)\!\! & = \sigma^{m + \sum_{j=1}^{i-1} d(\l_j)} x(0, d(\l_i) - (m \vee n))\\
& = \sigma^{\sum_{j=1}^{i-1}d(\l_j)}x(m, m+ d(\l_i) \! - \! (m \vee n))\\
& = \mu_i(m, m+ d(\mu_i) - (m \vee n)).
\end{align*}
Similarly,
$$\sigma^nx\left(\sum_{j=1}^{i-1} d(\l_j), \sum_{j=1}^i d(\l_j) - (m \vee n)\right) = \mu_i(n, n + d(\mu_i) - (m \vee n)).$$
By the definition of $F_i$, we know that $$\mu_i(m, m+ d(\mu_i) - (m \vee n)) \neq \mu_i(n , n+ d(\mu_i) - (m \vee n)),$$ and thus $\sigma^mx \neq \sigma^nx$ and $x$ is an aperiodic boundary path.  \\

\eqref{F} $\Longrightarrow$ \eqref{W}.  Fix distinct open sets $X, Y \subset \L$ such that $s(X) = s(Y)$ and $s|_X$ and $s|_Y$ are both homeomorphisms.  Similar to \cite[Remark 3.2]{LS}, we may assume $r(X) = r(Y)$ and further that for $\mu \in X$ and $\nu \in Y$ if $s(\mu) = s(\nu)$, then $r(\mu) = r(\nu)$.  To the contrary, suppose there exist $\mu \in X$ and $\nu \in Y$ with $s(\mu) = s(\nu) = v$ but $r(\mu) \neq r(\nu)$.  Then $\MCE(Xv,Yv) = \MCE(\mu, \nu) = \emptyset$.

Also, we may assume that there exist $m \neq n \in \NN^k$ such that $s(X^m) \cap s(Y^n) \neq \emptyset$.  If this were not the case, then for any $\mu \in X$ the unique $\nu \in Ys(\mu)$ must also have degree $m$.  Thus, either $\MCE(Xs(\mu), Ys(\mu)) = \MCE(\mu, \nu) = \emptyset$ or $\mu = \nu$.  Since $X \neq Y$, there must exist a pair with no minimal common extensions.

Now, fix $m \neq n \in \NN^k$ such that $s(X^m) \cap s(Y^n) \neq \emptyset$ as above.  Let $V$ be the nonempty open set $s(X^m) \cap s(Y^n)$.  Choose $\l_{V,m,n}$ by \eqref{F}.  Suppose that $\alpha \in \MCE(X\l, Y\l)$.  Then, $\alpha = \mu\l\mu' = \nu\l\nu'$ where $\mu \in X^m$, $\nu \in Y^n$, and $\mu', \nu' \in s(\l)\L$.  Notice that
\begin{align*}\l(m, m + d(\l) - (m \vee n)) & = \alpha(m + n, m + n + d(\l) - (m \vee n))\\ & = \l(n, n + d(\l) - (m \vee n)),\end{align*}
but this contradicts the choice of $\l$.  Thus, $\MCE(X\l, Y\l) = \emptyset$.

\eqref{W} $\Longrightarrow$ \eqref{F}.  Fix an open set $V \subset \L^0$ and a pair $m \neq n \in \NN^k$.  Notice $r_{m \vee n}^{-1}(V) \subset \L^{m \vee n}$ is open and let $U \subset r_{m \vee n}^{-1}(V)$ such that $s|_U$ is a homeomorphism.  Define $X:= \left\{\mu(m, m \vee n)\,|\,\mu \in U\right\}$ and $Y := \left\{\mu(n, m\vee n)\,|\, \mu \in U\right\}$.  Using a restriction of the continuous and open composition map, it can be shown that $X$ and $Y$ are both open.  As $s(X) = s(U) = s(Y)$ and both $s|_X$ and $s|_Y$ are homeomorphisms, by $\eqref{W}$ choose $\tau \in s(X)\L$ such that $\MCE(X\tau, Y\tau) = \emptyset$.  Since $s|_U$ is a homeomorphism, there is a unique $\mu \in U$ such that $s(\mu) = r(\tau)$.  Let $\xi = \mu(m, m \vee n) \in X$ and $\upsilon = \mu(n, m \vee n) \in Y$ and notice $X\tau = \{\xi\tau\}$ and $Y\tau = \{\upsilon\tau\}$.  Let $\omega \in s(\tau)\L^{m \vee n}$ and define $\l = \mu\tau\omega$.  Now, \begin{align*} \l(m, m + d(\l) - (m \vee n)) & = (\mu\tau\omega)(m, m + d(\tau) + (m \vee n)) \\ & = \xi(\tau\omega)(0, d(\tau) + m) \\ & = \xi\tau\omega(0, m)\end{align*}Similarly, $$\l(n, n + d(\l) - (m \vee n)) = \upsilon\tau\omega(0, n).$$  If $\xi\tau\omega(0,m) = \upsilon\tau\omega(0, n)$, then $\l \in \MCE(\xi\tau, \upsilon\tau) = \MCE(X\tau, Y\tau)$, a contradiction.
\end{proof}

To get a better intuition for Conditions \eqref{F} and \eqref{W}, it is helpful to consider the special case of a directed graph.  If a directed graph $E$ satisfies Condition \eqref{F}, then for any two natural numbers $m \neq n$ we can find a path in $E$ such that the $m^{\text{th}}$ and $n^{\text{th}}$ edges are different.  There is also an enlightening diagram of Condition \eqref{F} in the discrete case in Appendix A of \cite{RandS}.  Consider two paths $\mu$ and $\nu$ in a directed graph $E$ where $s(\mu) = s(\nu)$, $r(\mu) = r(\nu)$ and $d(\mu) \wedge d(\nu) = 0$ as described in \cite[Remark 3.2]{LS}.  This implies that one path, say $\mu$, is just a vertex and $\nu$ must be a loop based at that vertex.  Condition \eqref{W} then says that the loop $\mu$ must have an entry.

\section{Examples}

To demonstrate the importance of these formulations of aperiodicity we give an example of a topological $k$-graph in which Condition \eqref{W} is straightforward to verify whereas Condition \eqref{Y} would be quite difficult.  First we define what we call a \emph{twisted product topological $k$-graph}.  This twisting construction takes a discrete $k$-graph and puts a copy of an appropriate topological space $X$ at each vertex, and twists the edges according to a functor $\tau$.

\begin{prop}\label{twist}

Let $(\L, d)$ be a finitely aligned $k$-graph with no sources, $X$ be a second countable, locally compact, Hausdorff space, and $$\tau:\L \rightarrow \left\{\phi:X \rightarrow X\,|\,\phi \text{ is a local homeomorphism}\right\}$$ a continuous functor.  Then the pair $\big(\L \times_\tau X, \tilde{d}\big)$, with object and morphism sets
$$\Obj(\L \times_\tau X) := \Obj(\L) \times X \text{  and  } \Mor(\L \times_\tau X) := \Mor(\L) \times X,$$
range and source maps
$$r(\l, x) := \left(r(\l), \tau_\l(x)\right) \text{  and  } s(\l, x) := \left(s(\l), x\right),$$
and composition
$$(\l, \tau_\mu(x))\circ(\mu, x) = (\l\mu , x),$$
whenever $s(\l) = r(\mu)$ in $(\L,d)$, and degree functor
$$\tilde{d}(\l,x) = d(\l)$$
is a topological $k$-graph.

\end{prop}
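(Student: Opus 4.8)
The plan is to check the five defining conditions of a topological $k$-graph directly, after first confirming that $\L\times_\tau X$ is a genuine small category. The organizing observation is that $\Obj(\L)$ and $\Mor(\L)$ carry the discrete topology, so every topological assertion about $\L\times_\tau X$ can be checked fibrewise: a basic clopen piece of $\Mor(\L\times_\tau X)=\Mor(\L)\times X$ is a single fibre $\{\l\}\times X\cong X$, and the structure maps move these copies of $X$ around by the maps $\tau_\l$, each of which is a local homeomorphism and hence in particular continuous and open. Once this is understood, conditions (1), (2), and (4) are essentially immediate and the real content lies in (3) and (5).

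First I would verify that composition is well defined: $(\l,\tau_\mu(x))\circ(\mu,x)$ is declared whenever $s(\l)=r(\mu)$ in $\L$, and then $s(\l,\tau_\mu(x))=(s(\l),\tau_\mu(x))=(r(\mu),\tau_\mu(x))=r(\mu,x)$, so ranges and sources are consistent. Associativity and the unit laws are where the functoriality of $\tau$ enters, and the direction of composition must be tracked with care: for a vertex $v$, $\tau_v=\mathrm{id}_X$ (it is the identity endomorphism of the single object of the target category), so $(v,x)$ is an identity morphism of $\L\times_\tau X$ with $r(v,x)=s(v,x)=(v,x)$; and for composable $\l,\mu$ in $\L$ we have $\tau_{\l\mu}=\tau_\l\circ\tau_\mu$, which is exactly the identity needed to see that a composable triple $(\l,\cdot),(\mu,\cdot),(\nu,x)$ associates, both bracketings giving $(\l\mu\nu,x)$. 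The degree functor $\tilde d(\l,x)=d(\l)$ is then manifestly functorial since $d$ is.

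For condition (1), $\Obj(\L\times_\tau X)=\Obj(\L)\times X$ and $\Mor(\L\times_\tau X)=\Mor(\L)\times X$ are products of the (countable) discrete spaces $\Obj(\L),\Mor(\L)$ with the second-countable, locally compact, Hausdorff space $X$, and all three properties survive such products. For condition (4), $\tilde d$ factors as $d$ composed with the projection to $\Mor(\L)$, hence is continuous into discrete $\NN^k$. For condition (2), the source map $(\l,x)\mapsto(s(\l),x)$ is the product of the source map of $\L$ with $\mathrm{id}_X$, so it is continuous, and it is a local homeomorphism since on the clopen set $\{\l\}\times X$ it restricts to the homeomorphism $(\l,x)\mapsto(s(\l),x)$ onto the open set $\{s(\l)\}\times X$; and $r(\l,x)=(r(\l),\tau_\l(x))$ is continuous because, $\Mor(\L)$ being discrete, it suffices that each $x\mapsto(r(\l),\tau_\l(x))$ be continuous, which holds as $\tau_\l$ is continuous. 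The heart of the matter is condition (3). I would first identify the composable-pairs space: it consists of the pairs $\big((\l,\tau_\mu(x)),(\mu,x)\big)$ with $s(\l)=r(\mu)$ and $x\in X$, and with the subspace topology it is the topological disjoint union, indexed by the composable pairs $(\l,\mu)$ of $\L$, of the clopen "sheets" $\{(\l,\mu)\}\times\{(\tau_\mu(x),x):x\in X\}$, each sheet being homeomorphic to $X$ via $x\mapsto\big((\l,\tau_\mu(x)),(\mu,x)\big)$ (whose inverse is a restriction of a coordinate projection). Under this identification, composition sends the sheet of $(\l,\mu)$ onto $\{\l\mu\}\times X\subseteq\Mor(\L)\times X$ by $x\mapsto(\l\mu,x)$, which is an open topological embedding; as the sheets are clopen and cover the domain, composition is both continuous and open. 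Finally, for condition (5), given $(\l,x)$ with $\tilde d(\l,x)=d(\l)=p+q$, the composition formula forces any factorization $(\l,x)=(a,y)\circ(b,z)$ to satisfy $ab=\l$, $z=x$, and $y=\tau_b(z)$, so existence and uniqueness reduce to the unique factorization property of $\L$: writing $\l=\mu\nu$ with $d(\mu)=p$ and $d(\nu)=q$, the unique factorization of $(\l,x)$ is $(\mu,\tau_\nu(x))\circ(\nu,x)$.

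The main obstacle, though it is more a bookkeeping hazard than a genuine difficulty, is keeping the $X$-coordinates and the direction of $\tau$ straight — particularly in the description of the composable-pairs topology and in proving that composition is open; once one recognizes that space as a disjoint union of copies of $X$ on which composition acts as an open embedding, and once the functoriality convention $\tau_{\l\mu}=\tau_\l\circ\tau_\mu$ is fixed, everything else is routine.
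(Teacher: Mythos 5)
Your verification is correct, and it is precisely the direct, condition-by-condition check that the paper declines to write out (the paper states the proposition and omits the proof as ``straightforward''). The two points that actually require care --- that functoriality gives $\tau_v = \mathrm{id}_X$ on vertices and $\tau_{\l\mu} = \tau_\l \circ \tau_\mu$ (needed for the unit and associativity laws and for $r$ to respect composition), and that the composable-pairs space decomposes into clopen sheets homeomorphic to $X$ on which composition is an open embedding --- are exactly the ones you isolate, and your treatment of them is sound.
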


The proof that $\big(\L \times_\tau X, \tilde{d}\big)$ satisfies all the requirements of a topological $k$-graph is straightforward and the details unenlightening, so we omit the proof here.

Consider the discrete 2-graph of Evans and Sims \cite{ES} whose 1-skeleton is pictured in Figure~\ref{1skel}.

\begin{figure}[h]
\begin{center}
\begin{tikzpicture}

\node[inner sep=1pt, circle, fill=black] (v0) at (0,0) {};
	\node[inner  sep=4pt, anchor = north] at (v0.south) {\footnotesize{$v_{\text{\tiny{0}}}$}};
\node[inner sep=1pt, circle, fill=black] (v1) at (2,0) {};
	\node[inner  sep=4pt, anchor = north] at (v1.south) {\footnotesize{$v_{\text{\tiny{1}}}$}};
\node[inner sep=1pt, circle, fill=black] (v2) at (4,0) {};
	\node[inner  sep=4pt, anchor = north] at (v2.south) {\footnotesize{$v_{\text{\tiny{2}}}$}};
\node[inner sep=1pt, circle, fill=black] (v3) at (6,0) {};
	\node[inner  sep=4pt, anchor = north] at (v3.south) {\hspace{.05in}\footnotesize{$v_{\text{\tiny{3}}}$}};
	
\node at (6.5,0) {$\cdots$};
\node at (9.5,0) {$\cdots$};

\node[inner sep=1pt, circle, fill=black] (vn) at (7,0) {};
	\node[inner  sep=4pt, anchor = north] at (vn.south west) {\footnotesize{$v_{\text{\tiny{n}}}$} \hspace{.05in}};
\node[inner sep=1pt, circle, fill=black] (vn+1) at (9,0) {};
	\node[inner  sep=4pt, anchor = north] at (vn+1.south east) {\hspace{.18in}\footnotesize{$v_{\text{\tiny{n+1}}}$}};	
	

\draw[style=semithick, green!50!black, -latex]  (v1.north west) parabola[parabola height = .25 cm] (v0.north east);

\draw[style=semithick, green!50!black, -latex]  (v2.north west) parabola[parabola height = .25 cm] (v1.north east);
\draw[style=semithick, green!50!black, -latex]  (v2.north west) parabola[parabola height = .75 cm] (v1.north east);

\draw[style=semithick, green!50!black, -latex]  (v3.north west) parabola[parabola height = .25 cm] (v2.north east);
\draw[style=semithick, green!50!black, -latex]  (v3.north west) parabola[parabola height = .75 cm] (v2.north east);
\draw[style=semithick, green!50!black, -latex]  (v3.north west) parabola[parabola height = 1.25 cm] (v2.north east);

\draw[style=semithick, green!50!black, -latex]  (vn+1.north west) parabola[parabola height = .25 cm] (vn.north east);
\draw[style=semithick, green!50!black, -latex]  (vn+1.north west) parabola[parabola height = .75 cm] (vn.north east);
\draw[style=semithick, green!50!black, -latex]  (vn+1.north west) parabola[parabola height = 1.75 cm] (vn.north east);

\node at (8, 1.5) {$\vdots$};
	

\node[inner sep=2pt, anchor = south] at (1,.25) {\text{\footnotesize{\color{green!50!black} {$\alpha_{\text{\tiny{0}}}^{\text{\tiny{1}}}$}}}};	

\node[inner sep=2pt, anchor = south] at (3,.25) {\text{\footnotesize{\color{green!50!black} {$\alpha_{\text{\tiny{0}}}^{\text{\tiny{2}}}$}}}};	
\node[inner sep=2pt, anchor = south] at (3,.75) {\text{\footnotesize{\color{green!50!black} {$\alpha_{\text{\tiny{1}}}^{\text{\tiny{2}}}$}}}};	

\node[inner sep=2pt, anchor = south] at (5,.25) {\text{\footnotesize{\color{green!50!black} {$\alpha_{\text{\tiny{0}}}^{\text{\tiny{3}}}$}}}};	
\node[inner sep=2pt, anchor = south] at (5,.75) {\text{\footnotesize{\color{green!50!black} {$\alpha_{\text{\tiny{1}}}^{\text{\tiny{3}}}$}}}};	
\node[inner sep=2pt, anchor = south] at (5,1.25) {\text{\footnotesize{\color{green!50!black} {$\alpha_{\text{\tiny{2}}}^{\text{\tiny{3}}}$}}}};	

\node[inner sep=2pt, anchor = south] at (8,.25) {\text{\footnotesize{\color{green!50!black} {$\alpha_{\text{\tiny{0}}}^{\text{\tiny{n}}}$}}}};	
\node[inner sep=2pt, anchor = south] at (8,.75) {\text{\footnotesize{\color{green!50!black} {$\alpha_{\text{\tiny{1}}}^{\text{\tiny{n}}}$}}}};	
\node[inner sep=2pt, anchor = south] at (8,1.75) {\text{\footnotesize{\color{green!50!black} {$\alpha_{\text{\tiny{n-1}}}^{\text{\tiny{n}}}$}}}};	


\draw[dotted, style=semithick, blue, -latex]  (v1.south west) parabola[parabola height = -.25 cm] (v0.south east);

\draw[dotted, style=semithick, blue, -latex]  (v2.south west) parabola[parabola height = -.25 cm] (v1.south east);
\draw[dotted, style=semithick, blue, -latex]  (v2.south west) parabola[parabola height = -.75 cm] (v1.south east);

\draw[dotted, style=semithick, blue, -latex]  (v3.south west) parabola[parabola height = -.25 cm] (v2.south east);
\draw[dotted, style=semithick, blue, -latex]  (v3.south west) parabola[parabola height = -.75 cm] (v2.south east);
\draw[dotted, style=semithick, blue, -latex]  (v3.south west) parabola[parabola height = -1.25 cm] (v2.south east);

\draw[dotted, style=semithick, blue, -latex]  (vn+1.south west) parabola[parabola height = -.25 cm] (vn.south east);
\draw[dotted, style=semithick, blue, -latex]  (vn+1.south west) parabola[parabola height = -.75 cm] (vn.south east);
\draw[dotted, style=semithick, blue, -latex]  (vn+1.south west) parabola[parabola height = -1.75 cm] (vn.south east);

\node at (8, -1.25) {$\vdots$};


\node[inner sep=2pt, anchor = north] at (1,-.25) {\text{\footnotesize{\color{blue} {$\beta_{\text{\tiny{0}}}^{\text{\tiny{1}}}$}}}};	

\node[inner sep=2pt, anchor = north] at (3,-.25) {\text{\footnotesize{\color{blue} {$\beta_{\text{\tiny{0}}}^{\text{\tiny{2}}}$}}}};	
\node[inner sep=2pt, anchor = north] at (3,-.75) {\text{\footnotesize{\color{blue} {$\beta_{\text{\tiny{1}}}^{\text{\tiny{2}}}$}}}};	

\node[inner sep=2pt, anchor = north] at (5,-.25) {\text{\footnotesize{\color{blue} {$\beta_{\text{\tiny{0}}}^{\text{\tiny{3}}}$}}}};	
\node[inner sep=2pt, anchor = north] at (5,-.75) {\text{\footnotesize{\color{blue} {$\beta_{\text{\tiny{1}}}^{\text{\tiny{3}}}$}}}};	
\node[inner sep=2pt, anchor = north] at (5,-1.25) {\text{\footnotesize{\color{blue} {$\beta_{\text{\tiny{2}}}^{\text{\tiny{3}}}$}}}};

\node[inner sep=2pt, anchor = north] at (8,-.25) {\text{\footnotesize{\color{blue} {$\beta_{\text{\tiny{0}}}^{\text{\tiny{n}}}$}}}};	
\node[inner sep=2pt, anchor = north] at (8,-.75) {\text{\footnotesize{\color{blue} {$\beta_{\text{\tiny{1}}}^{\text{\tiny{n}}}$}}}};	
\node[inner sep=2pt, anchor = north] at (8,-1.75) {\text{\footnotesize{\color{blue} {$\beta_{\text{\tiny{n-1}}}^{\text{\tiny{n}}}$}}}};	

\end{tikzpicture}
\end{center}
\caption{The 1-skeleton of a discrete $k$-graph $\L$.}
\label{1skel}
\end{figure}
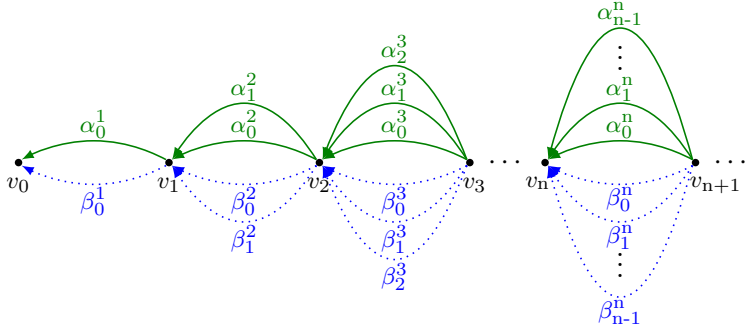
with factorization rules given by
$$\alpha_{i}^{n} \beta_{j}^{n+1} = \beta_{\xi_n(i,j)}^{n} \alpha_{\zeta_n(i,j)}^{n+1}$$
where $(i,j) \mapsto (\xi_n(i,j), \zeta_n(i,j))$ is a permutation of $\ZZ/n\ZZ \times \ZZ/(n+1)\ZZ$.

For an example of a twisted product we take the 1-skeleton of Figure~\ref{1skel}, with $\xi_n$ and $\zeta_n$ being ``plus 1" in the group $\ZZ/n\ZZ$, the topological space $\TT$ with the usual topology, and the functor $\tau$ given by $\tau_{\alpha_i^n}(z) = \tau_{\beta_j^n}(z) := z^n$.

We show the twisted topological $k$-graph above satisfies Condition \eqref{W}.  We start with the appropriate open sets $X, Y \in \L \times_\tau \TT$.  As the topology on $\L$ is discrete, we may assume that $X$ and $Y$ have the form $X = \{\mu\} \times U$ and $Y = \{\nu\} \times V$, for paths $\mu, \nu \in \L$ and open sets $U,V \subset \TT$.  Since $s(X) = s(Y)$, it must be true that $s(\mu) = s(\nu)$ and $U = V$.  As in \cite[Remark 3.2]{LS}, we may assume $r(\mu) = r(\nu)$ and $d(\mu) \wedge d(\nu) = 0$.  We assume $\mu$ consists of only green edges and $\nu$ only blue.  We can see that $\mu$ must have the form $$\mu = \Big(\alpha_{i_0}^m, z^k\Big) \ldots \Big(\alpha_{i_{n-2}}^{m+n-2}, \left(z^{m+n}\right)^{m + n - 1}\Big)\Big(\alpha_{i_{n-1}}^{m+n-1}, z^{m+n}\Big)\Big(\alpha_{i_n}^{m+n}, z\Big)$$
where $z \in U$ and $k$ is given by $\displaystyle{k = \!\!\!\prod_{i = m+1}^{m+n}\!\!i \,}$.  Similarly,$$\nu = \Big(\beta_{j_0}^m, z^k\Big) \ldots \Big(\beta_{j_n}^{m+n}, z\Big)$$
We consider the path $\tau = \left(\beta_{j_0 - n}^{m+n+1}, z^{1/m+n+1}\right)$.  Notice the path $\nu\tau$ is built of all blue edges, so it cannot be factored to appear as a different path.  We follow the factorization rules to rewrite the path $\mu\tau$ such that the first edge is a blue edge, and then compare to $\nu\tau$.
\begin{align*}
\mu\tau & = \Big(\alpha_{i_0}^m, z^k\Big)\Big(\alpha_{i_1}^{m+1}, z^{k/m+n}\Big) \ldots\Big(\alpha_{i_n}^{m+n}, z\Big)\Big(\beta_{j_0 - n}^{m+n+1}, z^{1/m+n+1}\Big)\\
& =\Big(\beta_{j_0 +1}^{m}, z^{k}\Big) \Big(\alpha_{i_0+1}^{m+1}, z^{k/m+n}\Big)\ldots\Big(\alpha_{i_n+1}^{m+n}, z^{1/m+n+1}\Big)
\end{align*}
If $m \neq 1$, $\left(\beta_{j_0}^m, z^k\right) \neq \left(\beta_{j_0+1}^m, z^k\right)$ and the paths $\mu\tau$ and $\nu\tau$ will have no common extensions.  If $m=1$, then we'd amend a similar path of shape $(0,2)$ so that $\mu\tau$ and $\nu\tau$ would be guaranteed to differ in the second blue edge by a similar calculation.  Thus, $\MCE(X\tau,Y\tau) = \emptyset$ and $\L \times_\tau \TT$ is aperiodic.

In contrast, if we were to check for aperiodicity using Condition \eqref{Y} we would need to consider infinite paths of the form $$x = \left(\alpha_0^i, z\right)\left(\beta_0^{i+1}, z^{1/i+1}\right)\left(\alpha_0^{i+2}, z^{1/(i+1)(i+2)} \right)\left(\beta_0^{i+3}, z^{1/(i+1)(i+2)(i+3)}\right) \ldots$$
and investigate $\sigma^mx$ for any $m \in \NN^k$.  A precise calculation requires a variety of formulas for $\sigma^mx$ to cover different cases and then checking the various factorizations of the result for periodic behavior.  It is at least three long and somewhat complicated calculations.

It is worth noting that this work covers only topological $k$-graphs without sources.  This was a common hypothesis in the early works on direct graphs, topological graphs, and $k$-graphs that was eventually replaced with less restrictive hypotheses.  It is the hope of the author to develop similar conditions and results for topological $k$-graphs with sources.

\bibliographystyle{amsplain}
\bibliography{References}

\end{document}